\numberwithin{equation}{section}
\newtheorem{Theorem}{Theorem}[section]
\newtheorem{Proposition}[Theorem]{Proposition}
{ \theoremstyle{definition}

\newtheorem{Remark}[Theorem]{Remark} }
\begin{document}

%\allowdisplaybreaks

\renewcommand{\thefootnote}{$\star$}

\newcommand{\arXivNumber}{1510.05770}

\renewcommand{\PaperNumber}{035}

\FirstPageHeading

\ShortArticleName{Generalized Stieltjes Transforms of Compactly-Supported Probability Distributions}

\ArticleName{Generalized Stieltjes Transforms\\ of Compactly-Supported Probability Distributions:\\ Further Examples\footnote{This paper is a~contribution to the Special Issue on Orthogonal Polynomials, Special Functions and Applications.
The full collection is available at \href{http://www.emis.de/journals/SIGMA/OPSFA2015.html}{http://www.emis.de/journals/SIGMA/OPSFA2015.html}}}

\Author{Nizar DEMNI}

\AuthorNameForHeading{N.~Demni}

\Address{IRMAR, Universit\'e de Rennes 1, Campus de Beaulieu, 35042 Rennes cedex, France}
\Email{\href{mailto:nizar.demni@univ-rennes1.fr}{nizar.demni@univ-rennes1.fr}}
\URLaddress{\url{https://perso.univ-rennes1.fr/nizar.demni}}

\ArticleDates{Received December 12, 2015, in f\/inal form April 06, 2016; Published online April 12, 2016}

\Abstract{For two families of beta distributions, we show that the generalized Stieltjes transforms of their elements may be written as elementary functions (powers and fractions) of the Stieltjes transform of the Wigner distribution. In particular, we retrieve the examples given by the author in a previous paper and relating generalized Stieltjes transforms of special beta distributions to powers of (ordinary) Stieltjes ones. We also provide further examples of similar relations which are motivated by the representation theory of symmetric groups. Remarkably, the power of the Stieltjes transform of the symmetric Bernoulli distribution is a generalized Stietljes transform of a probability distribution if and only if the power is greater than one. As to the free Poisson distribution, it corresponds to the product of two independent Beta distributions in $[0,1]$ while another example of Beta distributions in $[-1,1]$ is found and is related with the Shrinkage process. We close the exposition by considering the generalized Stieltjes transform of a linear functional related with Humbert polynomials and generalizing the symmetric Beta distribution. }

\Keywords{generalized Stieltjes transform; Beta distributions; Gauss hypergeometric function; Humbert polynomials}

\Classification{33C05; 33C20; 33C45; 44A15; 44A20}

\renewcommand{\thefootnote}{\arabic{footnote}}
\setcounter{footnote}{0}

\section{Reminder}
Let $\lambda$ be a positive real number and $\mu$ be a probability measure supported in the real line (possibly depending on $\lambda$). Its generalized Stieltjes transform is def\/ined by
\begin{gather*}
G_{\lambda, \mu}(z) := \int \frac{1}{(z-x)^{\lambda}}\mu(dx)
\end{gather*}
for complex numbers $z$ lying outside the support of $\mu$ and such that $(z-x)^{\lambda}$ is the principal branch of the power function. In particular, $G_{1,\mu}$ is the (ordinary) Stieltjes transform of $\mu$ which we simply denote hereafter by $G_{\mu}$. The latter has deep connections with the theory of continued fractions, combinatorics, Pad\'e approximations and free probability theory (see~\cite{Ism, NS} and references therein). As to $G_{\lambda, \mu}$, much less is known. For instance, this transform was introduced in \cite{Wid} for measures $\mu$ supported in the positive half-line and an inversion formula was derived. For the same type of measures, the notion of exact order is def\/ined in~\cite{Kar-Pri} and~\cite{Kar-Pri1} and determined for hypergeometric series. When $\lambda$ is a positive integer, the random weighted average of independent random variables gives rise to generalized Stieltjes transforms of continuous probability measures which may be written as products of Stieltjes transforms \cite{Hom-Sol,Roo-Sol, Van}). In the recent paper~\cite{Koo}, generalized Stieltjes transforms appear as transmutation operators between the solutions of the hypergeometric dif\/ferential equation. Motivated by a~generalization of free probability theory and relying on the characterization of ultraspherical-type generating series for orthogonal polynomials \cite{AlS-Ver,Demni0}, we provided in~\cite{Demni1} four examples of compactly-supported probability measures $\mu = \mu_{\lambda}$ for which there exist probability measures $\nu = \nu_{\lambda}$ satisfying
\begin{gather}\label{Rel}
G_{\lambda, \mu}(z) = [G_{1,\nu}(z)]^{\lambda} = [G_{\nu}(z)]^{\lambda}.
\end{gather}
Actually, the Wigner distribution plays a central role in (Voiculescu) free probability theory and is an instance of the symmetric beta distribution. Since any symmetric probability distribution with f\/inite moments of all orders arises as the weak limit of sums of self-adjoint variables in a~suitable algebra \cite{Cab-Ion}, it is then natural to seek a $\lambda$-deformation of free probability theory where the symmetric beta distribution appears in the central limit theorem. At the analytic side, when relations like~\eqref{Rel} hold, they bridge between the free additive convolution and its $\lambda$-deformation provided that the measure~$\nu$ is independent of~$\lambda$. As shown in~\cite{Demni1}, this last property is satisf\/ied by the couple of symmetric beta and Wigner distributions and this is the only example derived there with this property. This elementary observation raises the following questions:
\begin{itemize}\itemsep=0pt
\item Given $\lambda > 0$ and $\mu = \mu_{\lambda}$, what are the necessary and/or suf\/f\/icient conditions ensuring~\eqref{Rel} to hold with $\nu$ being independent of $\lambda$?
\item Conversely, given $\lambda > 0$ and $\nu$ independent of $\lambda$, when does the power $[G_{\nu}]^{\lambda}$ is a generalized Stieltjes transform of a probability distribution $\mu = \mu_{\lambda}$?
\item Is the symmetric beta distribution the only element in the family of beta distributions satisfying~\eqref{Rel} with $\nu$ being independent of $\lambda$?
\item In the same vein, f\/ind other or all examples of beta distributions satisfying~\eqref{Rel} with this property?
\end{itemize}
Most likely, it is much easier to look for partial or def\/initive answers to the two last questions rather than the remaining ones. In general, if we want to derive an expression of~$G_{\lambda,\mu}$ for given~$\lambda$ and~$\mu$, it suf\/f\/ices to expand
\begin{gather*}
(z,x) \mapsto \frac{1}{(z-x)^{\lambda}},
\end{gather*}
in a absolutely convergent series of orthogonal polynomials with respect to $\mu$ since then $G_{\lambda,\mu}$ is nothing else but the f\/irst term of this series. Recently, this task was achieved in \cite{Cohl1} for the family of Jacobi polynomials whence the generalized Stieltjes transform of any beta distribution in $[-1,1]$ follows after a simple integration. Using linear and quadratic transformations of the Gauss hypergeometric function, we retrieve the few examples of \eqref{Rel} given in \cite{Demni1} and prove for two larger classes of beta distributions that their generalized Stieltjes transforms are elementary functions (powers and fractions) of the Stieltjes transform of the Wigner distribution. We also provide a new example of beta distributions satisfying~\eqref{Rel}, where~$\nu$ does not depend on~$\lambda$ as well thereby giving a negative answer to the third question. Actually, the probability distribution~$\nu$ describes the large time behavior of the Shrinkage process and is the transition measure of triangular diagrams~\cite{Ker}. Two other examples of probability measures~$\nu$ occurring in representation theory of symmetric groups are considered. In the former, $\nu$~is the symmetric Bernoulli distribution which is the transition measure of a square Young diagram~\cite{Biane} and~\eqref{Rel} holds if and only if~$\lambda \geq 1$. The latter shows that the $\lambda$-power of the Stieltjes transform of the free Poisson distribution (which arises in the decomposition of the tensor product representation of the symmetric group~\cite{Biane1}) is the generalized Stieltjes transform of a product of two independent beta distributions supported in~$[0,1]$. Indeed, the multiplicative convolution with a beta distribution of special parameters provides a transformation preserving probability distributions satisfying~\eqref{Rel} for any $\lambda > 0$. In the last part of the paper, we investigate a~possible generalization of the relation~\eqref{Rel} holding between the symmetric beta and the Wigner distributions. More precisely, we consider the generating series of Humbert polynomials which are $d$-orthogonal with respect to $d \geq 1$ linear functionals and integrate it with respect to the f\/irst functional. Doing so leads to the problem of f\/inding a suitable root of a trinomial equation of degree $d+1$ which is known to be expressed by means of the Gauss hypergeometric function.

\section{Special functions}
In this paragraph, we recall the def\/initions and some properties of special functions occurring in the remainder of the paper. We refer the reader to \cite{Erd, Ism,Rai,Man-Sri}. Let $\Gamma$ denote the gamma function and recall the Legendre duplication formula:
\begin{gather*}
\sqrt{\pi} \Gamma(2z) = 2^{2z-1}\Gamma(z)\Gamma\left(z+\frac{1}{2}\right).
\end{gather*}
For a complex number $z$ and a positive integer $k \geq 1$, the Pochhammer symbol is def\/ined by
\begin{gather*}
(z)_k = z(z+1)\cdots(z+k-1)
\end{gather*}
with the convention $(z)_0 = 1$. When $z$ is not a negative integer, we can write
\begin{gather*}
(z)_k = \frac{\Gamma(z+k)}{\Gamma(z)},
\qquad \text{while}
\qquad
(-n)_k = \frac{(-1)^kn!}{(n-k)!}
\end{gather*}
for any positive integer $n \geq k$ and vanishes otherwise. Next, the hypergeometric series ${}_pF_q$ is def\/ined by
\begin{gather*}
{}_pF_q(a_1, \dots, a_p, b_1, \dots, b_q; z) = \sum_{n \geq 0} \frac{(a_1)_n\cdots (a_p)_n}{(b_1)_n\cdots (b_q)_n} \frac{z^n}{n!}
\end{gather*}
when the series converges. In particular, the Gauss hypergeometric series ${}_2F_1$ converges in the open unit disc $\{|z| < 1\}$ and has an analytic extension to the complex plane cut along the half line $[1,\infty)$. This function will play a major role in our computations and we use further the following linear and quadratic transformations valid for $|\arg(1-u)| < \pi$:
\begin{gather}\label{Lin1}
{}_2F_1(a,c+d,c; u) = \frac{1}{(1-u)^{a}}\, {}_2F_1\left(a,-d,c; \frac{u}{u-1}\right),
\\
\label{Quad0}
{}_2F_1 (a,b,2a; u) = \frac{1}{(1-(u/2))^b}\, {}_2F_1\left(\frac{b}{2}, \frac{b+1}{2}, a+\frac{1}{2}; \frac{u^2}{(2-u)^2}\right),
\\
\label{Quad1}
{}_2F_1\left(a,a+\frac{1}{2}, b; u\right) = \frac{2^{2a}}{(1+\sqrt{1-u})^{2a}}\, {}_2F_1\left(2a, 2a-b+1, b; \frac{u}{(1+\sqrt{1-u})^{2}}\right).
\end{gather}
In particular, \eqref{Quad1} yields the closed formulas
\begin{gather}\label{Id1}
{}_2F_1\left(a-\frac{1}{2}, a, 2a; u\right) = \frac{2^{2a-1}}{(1+\sqrt{1-u})^{2a-1}},
\end{gather}
and
\begin{gather}\label{Id2}
{}_2F_1\left(a, a+ \frac{1}{2}, 2a; u\right) = \frac{1}{\sqrt{1-u}}\frac{2^{2a-1}}{(1+\sqrt{1-u})^{2a-1}}.
\end{gather}
We shall also make use of the Euler integral representation of the ${}_2F_1$: for $\Re(c) > \Re(b) > 0$ and $|z| < 1$,
\begin{gather}\label{Euler}
{}_2F_1(a,b,c; z) = \frac{\Gamma(c)}{\Gamma(c-b)\Gamma(b)} \int_0^1(1-uz)^{-a}u^{b-1}(1-u)^{c-b-1} du.
\end{gather}
As to hypergeometric polynomials, we denote by $P_n^{(\gamma, \beta)}$, $C_n^{(\alpha)}$ the $n$-th Jacobi and ultraspherical respectively:
\begin{gather*}
P_n^{(\gamma, \beta)}(x) = \frac{(\gamma+1)_n}{n!}\, {}_2F_1\left(-n, n+\gamma+\beta+1, \gamma+1; \frac{1-x}{2}\right), \qquad \gamma, \beta > -1, \\
C_n^{(\alpha)}(x) = \frac{(2\alpha)_n}{(\alpha+1/2)_n} P_n^{(\alpha-1/2, \alpha-1/2)}(x), \qquad \alpha > -1/2, \qquad \alpha \neq 0.
\end{gather*}

\section{Generalized Stieltjes transform of beta distributions}

The starting point of our investigations is the following expansion proved in \cite[Theorem~1]{Cohl1}: for any $z \in \mathbb{C} {\setminus} (-\infty, 1]$ on any given ellipse with foci at $\pm 1$ and any $x$ in the interior of this ellipse,
\begin{gather*}
\frac{1}{(z-x)^{\lambda}} = \sum_{n = 0}^{\infty}\frac{\Gamma(\gamma+\beta+n+1)(\lambda)_n}{\Gamma(2n+\gamma+\beta+1)} \frac{2^n}{(z-1)^{n+\lambda}}\\
\hphantom{\frac{1}{(z-x)^{\lambda}} =}{}\times
 {}_2F_1\left(n+\lambda, n+\gamma+1, 2n+2+\gamma+\beta, \frac{2}{1-z}\right)P_n^{(\gamma, \beta)}(x).
\end{gather*}
For f\/ixed $z$, this series converges uniformly in $x \in [-1,1]$ (see, e.g., \cite[Chapter~IX, Theorem~9.1.1]{Sze}) and as shown in~\cite{Cohl1}, it is valid for a set of parameters $\lambda$, $\gamma$, $\beta$ containing $\mathbb{R}_+^* \times (-1,\infty)^2$. However, we shall restrict ourselves to the latter which is suf\/f\/icient for our purposes and subsequent computations. As a matter of fact, the orthogonality of Jacobi polynomials with respect to the beta distribution
\begin{gather*}
\mu_{\gamma,\beta}(dx) := \frac{\Gamma(\gamma+\beta+2)}{2^{\gamma+\beta+1}\Gamma(\gamma+1)\Gamma(\beta+1)} (1-x)^{\gamma}(1+x)^{\beta}{\bf 1}_{[-1,1]}(x)dx
\end{gather*}
readily gives\footnote{We could also use the Euler integral representation of the ${}_2F_1$ hypergeometric function.}
\begin{gather}\label{GST1}
G_{\lambda, \mu_{\gamma,\beta}}(z) = \frac{1}{(z-1)^{\lambda}}\,{}_2F_1\left(\lambda, \gamma+1, \gamma+\beta+2, \frac{2}{1-z}\right).
\end{gather}
Note that the r.h.s.\ of $\eqref{GST1}$ is not symmetric in $(\gamma, \beta)$ unless $\gamma = \beta$. Nonetheless, we can use~\eqref{Lin1} to transform~\eqref{GST1} into
\begin{gather}\label{GST2}
G_{\lambda, \mu_{\gamma,\beta}}(z) = \frac{1}{(z+1)^{\lambda}}\, {}_2F_1\left(\lambda, \beta+1, \gamma+\beta+2, \frac{2}{1+z}\right).
\end{gather}
Here, one performs the transformation for $z$ lying in a suitable region in the complex plane (in order to use the principal determinations of $(z+1)^{\lambda}$, $(z-1)^{\lambda}$, $[(z+1)/(z-1)]^{\lambda}$) and then extends the obtained equality analytically to $\mathbb{C} {\setminus} (-\infty, 1]$. With~\eqref{GST1} and~\eqref{GST2} in hands, we start our investigations of generalized Stieltjes transforms of two families of beta distributions.

\subsection{Symmetric beta distributions}
Take $\gamma=\beta = \lambda-(1/2)$, $\lambda > 0$ in \eqref{GST1}. Then \eqref{Id1} entails
\begin{gather*}
\frac{1}{(z-1)^{\lambda}}\, {}_2F_1\left(\lambda, \lambda+\frac{1}{2}, 2\lambda+1, \frac{2}{1-z}\right) = \left[\frac{2}{z+\sqrt{z^2-1}}\right]^{\lambda}.
\end{gather*}
In the right-hand side of the last expression, we recognize the Stieltjes transform of the Wigner distribution
\begin{gather*}
G_{\rm W}(z) = \int_{-1}^1 \frac{1}{z-x} \frac{2\sqrt{1-x^2}}{\pi}dx = \frac{2}{z+\sqrt{z^2-1}}, \qquad z > 1.
\end{gather*}
As a result,
\begin{gather}\label{Examp1}
\frac{\Gamma(\lambda+1)}{\sqrt{\pi}\Gamma(\lambda+1/2)} \int_{-1}^1\frac{(1-x^2)^{\lambda-1/2}}{(z-x)^{\lambda}}dx = [G_{\rm W}(z)]^{\lambda},
\end{gather}
which is the f\/irst example given in~\cite{Demni1}. In order to retrieve the second example given in~\cite{Demni1}, specialize~\eqref{GST1} to $\gamma = \beta = \lambda - (3/2)$, $\lambda > 1$ and use~\eqref{Id2} to derive
\begin{gather*}
\frac{1}{(z-1)^{\lambda}}\, {}_2F_1\left(\lambda-\frac{1}{2}, \lambda, 2\lambda-1, \frac{2}{1-z}\right) = \frac{1}{\sqrt{z^2-1}}[G_{\rm W}(z)]^{\lambda-1}.
\end{gather*}
But the Stieltjes transform of the arcsine distribution reads
\begin{gather*}
G_{\rm AS}(z) = \int_{-1}^1 \frac{1}{z-x} \frac{dx}{\pi \sqrt{1-x^2}} = \frac{1}{\sqrt{z^2-1}}.
\end{gather*}
Consequently, for any $\lambda > 1$,
\begin{gather}\label{Examp2}
\frac{\Gamma(\lambda)}{\sqrt{\pi}\Gamma(\lambda-1/2)}\int_{-1}^1 \frac{(1-x^2)^{\lambda-3/2}}{(z-x)^{\lambda}} dx = G_{\rm AS}(z)[G_{\rm W}(z)]^{\lambda-1}.
\end{gather}
These two examples are indeed instances of the following more general formula:
\begin{Proposition}
If $\lambda = \gamma + 1/2 + k \geq 0$ for some integer $k \geq 0$, then the generalized Stieltjes transform of
\begin{gather*}
\mu_{\gamma, \gamma}(dx) = \frac{\Gamma(\gamma+3/2)}{\sqrt{\pi}\Gamma(\gamma+1)} \big(1-x^2\big)^{\gamma}{\bf 1}_{[-1,1]}(x)dx
\end{gather*}
may be written by means of powers and fractions of the variable $G_{\rm W}$:
\begin{gather*}
G_{\lambda, \mu_{\gamma, \gamma}}(z) = \frac{4^k[G_{\rm W}(z)]^{\lambda}}{\left[4-[G_{\rm W}(z)]^2\right]^{k}}\, {}_2F_1\left(k, 1-k, \gamma + \frac{3}{2}; \frac{[G_{\rm W}(z)]^2}{[G_{\rm W}(z)]^2-4}\right).
\end{gather*}
\end{Proposition}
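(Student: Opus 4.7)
The plan is to chain the two quadratic transformations \eqref{Quad0}, \eqref{Quad1} with the linear transformation \eqref{Lin1}, starting from the symmetric case $\beta = \gamma$ of \eqref{GST1}.

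First I would specialise \eqref{GST1} to $\beta = \gamma$, so that the transform reads
\begin{gather*}
G_{\lambda, \mu_{\gamma,\gamma}}(z) = \frac{1}{(z-1)^{\lambda}}\, {}_2F_1\!\left(\lambda, \gamma+1, 2(\gamma+1); \frac{2}{1-z}\right).
\end{gather*}
Since $c = 2(\gamma+1) = 2a$ with $a = \gamma+1$, the quadratic transformation \eqref{Quad0} (with $b = \lambda$) is applicable after using the symmetry of ${}_2F_1$ in its first two parameters. A short computation with $u = 2/(1-z)$ gives $1-u/2 = z/(z-1)$ and $u^2/(2-u)^{2} = 1/z^{2}$, so that the $(z-1)^{\lambda}$ cancels and one obtains
\begin{gather*}
G_{\lambda, \mu_{\gamma,\gamma}}(z) = \frac{1}{z^{\lambda}}\, {}_2F_1\!\left(\frac{\lambda}{2}, \frac{\lambda+1}{2}, \gamma+\frac{3}{2}; \frac{1}{z^{2}}\right).
\end{gather*}

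Next I would apply \eqref{Quad1} with $a = \lambda/2$ and $b = \gamma + 3/2$, whose hypothesis $a + 1/2 = (\lambda+1)/2$ is satisfied. Using $\sqrt{1-1/z^{2}} = \sqrt{z^{2}-1}/z$ and the defining identity $G_{\rm W}(z) = 2/(z+\sqrt{z^{2}-1})$, one checks that
\begin{gather*}
\frac{2}{1+\sqrt{1-1/z^{2}}} = z\,G_{\rm W}(z), \qquad \frac{1/z^{2}}{(1+\sqrt{1-1/z^{2}})^{2}} = \frac{[G_{\rm W}(z)]^{2}}{4},
\end{gather*}
so that the factor $1/z^{\lambda}$ combines with $2^{\lambda}/(1+\sqrt{1-1/z^{2}})^{\lambda}$ to produce $[G_{\rm W}(z)]^{\lambda}$. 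Using the constraint $\lambda - \gamma - 1/2 = k$ to rewrite the second ${}_2F_1$ parameter as $k$, this yields the intermediate identity
\begin{gather*}
G_{\lambda, \mu_{\gamma,\gamma}}(z) = [G_{\rm W}(z)]^{\lambda}\, {}_2F_1\!\left(\lambda, k, \gamma+\frac{3}{2}; \frac{[G_{\rm W}(z)]^{2}}{4}\right).
\end{gather*}

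Finally, since $\lambda = (\gamma + 3/2) + (k-1)$, the linear transformation \eqref{Lin1} applies with $a = k$, $c = \gamma + 3/2$ and $d = k-1$, converting the above into
\begin{gather*}
\frac{[G_{\rm W}(z)]^{\lambda}}{(1-[G_{\rm W}(z)]^{2}/4)^{k}}\, {}_2F_1\!\left(k, 1-k, \gamma+\frac{3}{2}; \frac{[G_{\rm W}(z)]^{2}}{[G_{\rm W}(z)]^{2}-4}\right),
\end{gather*}
which is the announced formula after pulling out $4^{k}$. The cases $k = 0$ and $k = 1$ reduce the ${}_2F_1$ to $1$ and recover \eqref{Examp1}, \eqref{Examp2} as a sanity check.

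The only real obstacle is bookkeeping of branches: \eqref{Quad0}, \eqref{Quad1} and \eqref{Lin1} each require specific $|\arg|<\pi$ conditions, so I would first carry out the calculation for $z$ real and sufficiently large (where $G_{\rm W}(z)$, $\sqrt{z^{2}-1}$ and all the factors raised to fractional powers are positive), and then invoke analytic continuation on $\mathbb{C}\setminus(-\infty,1]$, exactly as is done below \eqref{GST2}.
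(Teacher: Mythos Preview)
Your proposal is correct and follows essentially the same route as the paper's proof: specialise \eqref{GST1} to $\beta=\gamma$, apply \eqref{Quad0} to reach the ${}_2F_1(\lambda/2,(\lambda+1)/2,\gamma+3/2;1/z^{2})$ form, then \eqref{Quad1} to bring in $G_{\rm W}$, and finally \eqref{Lin1} exploiting $\lambda=(\gamma+3/2)+(k-1)$. The paper likewise restricts first to real $z>1$ (noting $1-[G_{\rm W}(z)]^{2}/4\in[0,1)$ there) before extending by analytic continuation, so your treatment of the branch issue matches as well.
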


\begin{proof}
Specialize \eqref{GST1} to $\gamma = \beta$ and use \eqref{Quad0} to get
\begin{gather*}
\frac{1}{(z-1)^{\lambda}}\,{}_2F_1\left(\lambda, \gamma+1, 2\gamma+2, \frac{2}{1-z}\right) = \frac{1}{z^{\lambda}}\,{}_2F_1\left(\frac{\lambda}{2}, \frac{\lambda+1}{2}, \gamma+\frac{3}{2}, \frac{1}{z^2}\right).
\end{gather*}
Assuming $\gamma +(1/2) \leq \lambda$ and using \eqref{Quad1} transforms the r.h.s.\ of the last equality to
\begin{gather*}
\frac{1}{z^{\lambda}}\, {}_2F_1\left(\frac{\lambda}{2}, \frac{\lambda+1}{2}, \gamma+\frac{3}{2}, \frac{1}{z^2}\right) = [G_{\rm W}(z)]^{\lambda} \, {}_2F_1\left(\lambda, \lambda-\gamma - \frac{1}{2}, \gamma + \frac{3}{2}; \frac{[G_{\rm W}(z)]^2}{4}\right).
\end{gather*}
Assuming further that $\lambda = \gamma + 1/2 + k$ for some integer $k \geq 0$ and noting that
\begin{gather*}
1-\frac{[G_{\rm W}(z)]^2}{4} = 2\frac{\sqrt{z^2-1}}{z+\sqrt{z^2-1}} \in [0,1), \qquad z > 1,
\end{gather*}
then we can appeal to the linear transformation \eqref{Lin1} and end up with
\begin{gather*}
{}_2F_1\left(k, \lambda, \gamma + \frac{3}{2}; \frac{[G_{\rm W}(z)]^2}{4}\right) = \frac{4^k}{\left[4-[G_{\rm W}(z)]^2\right]^{k}}\, {}_2F_1\left(k, 1-k, \gamma + \frac{3}{2}; \frac{[G_{\rm W}(z)]^2}{[G_{\rm W}(z)]^2-4}\right).\tag*{\qed}
\end{gather*}
\renewcommand{\qed}{}
\end{proof}

\begin{Remark}
Note that
\begin{gather*}
1-\frac{[G_{\rm W}(z)]^2}{4} = \sqrt{z^2-1}G_{\rm W}(z) = \frac{G_{\rm W}(z)}{G_{\rm AS}(z)}
\end{gather*}
so that
\begin{gather*}
G_{\lambda, \mu_{\lambda -(1/2) - k, \lambda- (1/2)-k}}(z) = [G_{\rm W}(z)]^{\lambda-1}[G_{\rm AS}(z)]^{k}\, {}_2F_1\left(k, 1-k, \gamma + \frac{3}{2}; \frac{G_{\rm AS}(z)G_{\rm W}(z)}{4}\right).
\end{gather*}
\end{Remark}

In the next paragraph, we derive a similar formula for a family of nonsymmetric beta distributions.

\subsection{Nonsymmetric beta distributions}
In \cite{Demni1}, two other examples of probability distributions satisfying~\eqref{Rel} were derived. They correspond to nonsymmetric beta distributions with parameters
\begin{gather}\label{Parameters}
(\gamma, \beta) = \left(\lambda - \frac{1}{2}, \lambda - \frac{3}{2}\right), \qquad (\gamma, \beta) = \left(\lambda - \frac{3}{2}, \lambda - \frac{1}{2}\right)
\end{gather}
and readily follow from \eqref{GST1} together with the identities~\eqref{Id1} and~\eqref{Id2}. As with the previous family of symmetric beta distributions, we can derive a more general formula for generalized Stieltjes transforms of nonsymmetric ones with
parameters $\gamma = \lambda -1/2$ and $\beta = \lambda-k-1/2 = \gamma - k$, $k \geq 1$:
\begin{Proposition}
For any $z > 1$,
\begin{align*}
G_{\lambda, \mu_{\lambda-(1/2),\lambda-(1/2)-k}}(z) = \frac{[G_{\rm W}(z)]^{\lambda-(k/2)}}{(1+z)^{k/2}}\, {}_2F_1\left(1-k, k, 2\lambda-k+1; \frac{G_{\rm W}(z)}{G_{\rm W}(z)+2}\right).
\end{align*}
The equality extends analytically to the complex plane cut along $(-\infty, 1]$.
\end{Proposition}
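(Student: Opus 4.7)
The plan is to specialize \eqref{GST1} to $(\gamma,\beta)=(\lambda-1/2,\lambda-1/2-k)$, apply the quadratic transformation \eqref{Quad1}, follow with the Pfaff linear transformation \eqref{Lin1}, and then reconcile the prefactors through the explicit expression of $G_{\rm W}$. The crucial observation is that for this choice of parameters, the first two entries of the ${}_2F_1$ in \eqref{GST1} become $\lambda$ and $\lambda+1/2$, so \eqref{Quad1} is directly applicable. The skeleton is the same as in the previous proposition, but the extra shift by $k$ in $\beta$ forces an additional Pfaff step at the end.

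Substituting into \eqref{GST1} yields
$$G_{\lambda,\mu_{\gamma,\beta}}(z)=\frac{1}{(z-1)^{\lambda}}\,{}_2F_1\left(\lambda,\lambda+\frac{1}{2},2\lambda-k+1;\frac{2}{1-z}\right).$$
Writing $u=2/(1-z)$ one has $1-u=(z+1)/(z-1)$, and a short manipulation based on $(\sqrt{z+1}+\sqrt{z-1})^2=2(z+\sqrt{z^2-1})=4/G_{\rm W}(z)$ shows that the prefactor $(z-1)^{-\lambda}\cdot 2^{2\lambda}/(1+\sqrt{1-u})^{2\lambda}$ produced by \eqref{Quad1} collapses to $[G_{\rm W}(z)]^{\lambda}$, while the new argument $u/(1+\sqrt{1-u})^2$ equals $-G_{\rm W}(z)/2$. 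This gives the intermediate formula
$$G_{\lambda,\mu_{\gamma,\beta}}(z)=[G_{\rm W}(z)]^{\lambda}\,{}_2F_1\left(2\lambda,k,2\lambda-k+1;-\frac{G_{\rm W}(z)}{2}\right).$$

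Using the symmetry of ${}_2F_1$ in its first two arguments and then applying \eqref{Lin1} with $a=k$, $c+d=2\lambda$, $c=2\lambda-k+1$ (so that $-d=1-k$), one obtains, since $1+G_{\rm W}(z)/2=(G_{\rm W}(z)+2)/2$ and $v/(v-1)=G_{\rm W}(z)/(G_{\rm W}(z)+2)$ for $v=-G_{\rm W}(z)/2$,
$$G_{\lambda,\mu_{\gamma,\beta}}(z)=[G_{\rm W}(z)]^{\lambda}\,\frac{2^k}{(G_{\rm W}(z)+2)^k}\,{}_2F_1\left(1-k,k,2\lambda-k+1;\frac{G_{\rm W}(z)}{G_{\rm W}(z)+2}\right).$$
The hypergeometric factor already matches the one in the statement.

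What remains is the identity $G_{\rm W}(z)+2=2\sqrt{(1+z)G_{\rm W}(z)}$, which follows at once from $G_{\rm W}(z)=(\sqrt{z+1}-\sqrt{z-1})^2$ (itself a consequence of $G_{\rm W}(z)=2(z-\sqrt{z^2-1})$) via $G_{\rm W}(z)+2=2\sqrt{z+1}(\sqrt{z+1}-\sqrt{z-1})$. Taking $k$-th powers converts the prefactor $2^k[G_{\rm W}(z)]^{\lambda}/(G_{\rm W}(z)+2)^k$ into $[G_{\rm W}(z)]^{\lambda-k/2}/(1+z)^{k/2}$, completing the match. The main, modest obstacle is careful bookkeeping of branches so that \eqref{Quad1} is legitimately applied; this is cleanest on the real ray $z>1$, after which the identity on $\mathbb{C}\setminus(-\infty,1]$ follows by analytic continuation since both sides are holomorphic there.
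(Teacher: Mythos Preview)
Your proof is correct and follows essentially the same route as the paper: specialize \eqref{GST1}, apply the quadratic transformation \eqref{Quad1} to obtain the intermediate expression $[G_{\rm W}(z)]^{\lambda}\,{}_2F_1(2\lambda,k,2\lambda-k+1;-G_{\rm W}(z)/2)$, then use \eqref{Lin1} and finally simplify the prefactor via the identity $(1+G_{\rm W}(z)/2)^2=(1+z)G_{\rm W}(z)$. The only cosmetic difference is that the paper derives this last identity from $1+G_{\rm W}(z)^2/4=zG_{\rm W}(z)$, whereas you obtain it from the factorization $G_{\rm W}(z)=(\sqrt{z+1}-\sqrt{z-1})^2$.
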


\begin{proof}
Using \eqref{Quad1} and \eqref{Lin1}, we get
\begin{align*}
G_{\lambda, \mu_{\lambda-(1/2),\lambda-(1/2)-k}}(z) &= \frac{1}{(z-1)^{\lambda}}\,{}_2F_1\left(\lambda, \lambda + \frac{1}{2}, \lambda +\beta+\frac{3}{2}, \frac{2}{1-z}\right)
\\ &= [G_{\rm W}(z)]^{\lambda}\, {}_2F_1\left(2\lambda, \lambda-\beta-\frac{1}{2}, \lambda+\beta+\frac{3}{2}; -\frac{G_{\rm W}(z)}{2}\right)
\\& = \frac{2^k[G_{\rm W}(z)]^{\lambda}}{[2+G_{\rm W}(z)]^{k}}\, {}_2F_1\left(1-k, k, 2\lambda-k+1; \frac{G_{\rm W}(z)}{G_{\rm W}(z)+2}\right).
\end{align*}
From the relation
\begin{gather*}
1+\frac{[G_{\rm W}(z)]^2}{4} = zG_{\rm W}(z),
\end{gather*}
it follows that
\begin{gather*}
\left(1+\frac{G_{\rm W}(z)}{2}\right)^2 = (z+1)G_{\rm W}(z)
\end{gather*}
and the proposition follows.
\end{proof}

\begin{Remark}
Interchanging the roles of $\gamma$ and $\beta$ with the help of \eqref{GST2}, similar computations yield
\begin{align*}
G_{\lambda, \mu_{\lambda-(1/2) - k,\lambda-(1/2)}}(z) &= \frac{2^k[G_{\rm W}(z)]^{\lambda}}{[2-G_{\rm W}(z)]^{k}}\, {}_2F_1\left(1-k, k, 2\lambda-k+1; \frac{G_{\rm W}(z)}{G_{\rm W}(z)-2}\right)
\\& = \frac{[G_{\rm W}(z)]^{\lambda - (k/2)}}{(z-1)^{k/2}}\, {}_2F_1\left(1-k, k, 2\lambda-k+1; \frac{G_{\rm W}(z)}{G_{\rm W}(z)-2}\right).
\end{align*}
\end{Remark}

\section[Generalized Stieltjes transforms as powers of Stieltjes transforms: further examples]{Generalized Stieltjes transforms as powers\\ of Stieltjes transforms: further examples}

So far, we dispose of four couples $(\mu,\nu)$ of probability distributions satisfying \eqref{Rel}: those displayed in~\eqref{Examp1} and~\eqref{Examp2} and those corresponding to the two couples of parameters specif\/ied in~\eqref{Parameters}. However, only~\eqref{Examp1} has the property that $\nu$ (the Wigner distribution in this case) does not depend on $\lambda$. In this section, we derive three more examples enjoying this property which, like the Wigner distribution, appeared as transition distributions of Young or continuous diagrams (see~\cite{Ker} for more details).

\subsection{The square diagram}
The symmetric Bernoulli distribution
\begin{gather*}
\nu = \frac{1}{2}[\delta_{-L} + \delta_L], \qquad L \in \mathbb{N} {\setminus} \{0\},
\end{gather*}
is the basic example of transition distribution of a Young diagram. Indeed, it corresponds to the square diagram of width~$L$~\cite{Biane}. To simplify, take $L=1$ so that
 \begin{gather*}
G_{\nu}(z) = \frac{z}{z^2-1}.
\end{gather*}
Then we shall prove
\begin{Proposition}
If $\nu$ is the symmetric Bernoulli distribution, then
\begin{gather*}
G_{\lambda, \mu}(z) = [G_{\nu}(z)]^{\lambda}
\end{gather*}
for every $\lambda \geq 1$, where
\begin{gather*}
\mu(dx) = \mu_{\lambda}(dx) = \frac{1}{2^{\lambda}} \left[[\delta_1+\delta_{-1}](dx) + \frac{1}{\sqrt{|x|}}h(\sqrt{|x|})dx\right]
\end{gather*}
and
\begin{gather*}
h(x) = h_{\lambda}(x) := \frac{\lambda(\lambda-1)}{4}x^{\lambda-1}\, {}_2F_1\left(\frac{\lambda}{2}+1, \frac{\lambda+1}{2}; 2; 1-x\right){\bf 1}_{[0,1]}(x).
\end{gather*}
\end{Proposition}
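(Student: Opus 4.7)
My plan is to compute $G_{\lambda,\mu}(z)$ directly from the decomposition of $\mu$ and match the outcome against $[z/(z^2-1)]^\lambda$. First, I split $\mu$ into its atomic and absolutely continuous parts. The atomic part immediately contributes $\frac{1}{2^\lambda}[(z-1)^{-\lambda}+(z+1)^{-\lambda}]$. The density $x\mapsto h_\lambda(\sqrt{|x|})/\sqrt{|x|}$ on $[-1,1]$ is even in $x$, so the absolutely continuous contribution collapses to an integral over $[0,1]$; the substitution $x=t^2$ (with $dx=2t\,dt$) then turns it into
\[
\frac{2}{2^\lambda}\int_0^1 h_\lambda(t)\left[\frac{1}{(z-t^2)^\lambda}+\frac{1}{(z+t^2)^\lambda}\right]dt.
\]

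Second, I would apply a quadratic transformation of the Gauss hypergeometric function to rewrite the ${}_2F_1$ factor appearing in $h_\lambda$. Since its upper parameters $\frac{\lambda}{2}+1$ and $\frac{\lambda+1}{2}$ differ by $\frac{1}{2}$, identity \eqref{Quad1} with $a=(\lambda+1)/2$ applies and produces an algebraic prefactor $2^{\lambda+1}/(1+\sqrt{t})^{\lambda+1}$ multiplying ${}_2F_1\bigl(\lambda+1,\lambda;2;(1-\sqrt t)/(1+\sqrt t)\bigr)$. Alternatively one may first use Euler's transformation ${}_2F_1(a,b;c;w)=(1-w)^{c-a-b}{}_2F_1(c-a,c-b;c;w)$ to recast $h_\lambda$ in a form whose endpoint behaviour at $t=0$ and $t=1$ is transparent. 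Either way, positivity and total mass $1$ of $\mu$ must be checked in parallel: positivity is immediate because $\lambda(\lambda-1)\ge 0$ and a ${}_2F_1$ with positive parameters and argument in $[0,1)$ is nonnegative, while the total-mass verification reduces to a single hypergeometric integral evaluable at $1$.

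Third, I would feed the transformed $h_\lambda$ into the Euler integral representation \eqref{Euler}, introducing an auxiliary variable $s\in[0,1]$, and interchange the order of integration (justified by absolute convergence for $z>1$). The resulting inner integral in $t$ becomes elementary after the further substitution $u=t^2$, and the whole expression reduces via the Feynman-type identity
\[
\frac{1}{(z-1)^\alpha(z+1)^\beta}=\frac{\Gamma(\alpha+\beta)}{2^{\alpha+\beta-1}\Gamma(\alpha)\Gamma(\beta)}\int_{-1}^1\frac{(1+u)^{\alpha-1}(1-u)^{\beta-1}}{(z-u)^{\alpha+\beta}}du
\]
to an elementary combination of powers of $z-1$ and $z+1$. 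Adding the atomic and absolutely continuous pieces and using $2z=(z-1)+(z+1)$ together with $(z-1)(z+1)=z^2-1$, the sum simplifies to $z^\lambda/(z^2-1)^\lambda=[G_\nu(z)]^\lambda$, first for real $z>1$ and then by analyticity on $\mathbb{C}\setminus(-\infty,1]$.

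The main obstacle is the bookkeeping in the third step: the quadratic transformation, the Euler integral, and the Feynman-type identity must be orchestrated so that the boundary contributions at $t=0$ and $t=1$ conspire with the atomic terms $(z\pm 1)^{-\lambda}/2^\lambda$ to produce exactly $z^\lambda/(z^2-1)^\lambda$ and nothing more. The restriction $\lambda\ge 1$ is essential: it is precisely the range in which $h_\lambda\ge 0$, so $\mu$ is a positive measure; and the case $\lambda=1$ is a tautology since $h_1\equiv 0$ and $\mu_1=\nu$.
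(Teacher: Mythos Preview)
Your plan and the paper's proof follow genuinely different routes. The paper never computes $G_{\lambda,\mu}$ from the explicit $\mu$; instead it starts from the target identity, writes $[G_\nu(z)]^\lambda=(1-w^2)^{-\lambda}$ with $w=1/z$, reads off the even moments $\int x^{2k}\mu(dx)=(2k)!(\lambda)_k/[k!(\lambda)_{2k}]$, and rewrites them via Legendre duplication as $(\lambda)_k(1/2)_k\big/[(\lambda/2)_k((\lambda+1)/2)_k]$. These are then recognised as the moment sequence of a $G$-distribution in the sense of Dufresne, and a theorem from that reference is invoked: it states that such a sequence comes from a probability measure on $[0,1]$ if and only if $\lambda\ge 1$, and simultaneously supplies the explicit Lebesgue decomposition. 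The measure $\mu$ is then obtained as the symmetric square-root pushforward of that distribution. Thus the paper \emph{derives} $\mu$ and gets the sharp threshold $\lambda\ge 1$ as a necessary condition for free.

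Your approach is the converse verification: take the stated $\mu$ and compute $G_{\lambda,\mu}$ by brute-force integration. This is a legitimate and more self-contained strategy (no external $G$-distribution machinery), but what you have written is a plan rather than a proof, and it halts exactly at the hard part. Two concrete issues with your third step: (i)~after the quadratic transformation you obtain ${}_2F_1(\lambda+1,\lambda;2;\cdot)$ with $c=2$, and for $\lambda>1$ both numerator parameters exceed~$c$, so the Euler representation~\eqref{Euler} is not directly applicable across the full range $\lambda\ge 1$; some analytic-continuation or alternative integral representation would be required. (ii)~The assertion that the inner $t$-integral becomes elementary after $u=t^2$ is unsupported: the kernels $(z\pm t^2)^{-\lambda}$ are indeed rational in $u$, but the transformed hypergeometric factor still carries the argument $(1-\sqrt t)/(1+\sqrt t)$, so the integrand is not obviously elementary in~$u$. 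The route may be salvageable, but the nontrivial computation is precisely what remains undone, whereas the paper's moment-matching argument bypasses it entirely by quoting a known classification.
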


\begin{proof}
Let $\lambda >0$ and assume \eqref{Rel} holds for some probability measure $\mu = \mu_{\lambda}$, namely,
\begin{gather*}
\int \frac{1}{(1-wx)^{\lambda}}\mu(dx) = \frac{1}{(1-w^2)^{\lambda}} = \sum_{k \geq 0}\frac{(\lambda)_k}{k!} w^{2k}, \qquad w = 1/z \in (0,1).
\end{gather*}
Hence, $\mu$ is symmetric and its even moments are given by
\begin{gather*}
\int x^{2k} \mu(dx) = \frac{(2k)!(\lambda)_k}{k!(\lambda)_{2k}}.
\end{gather*}
Using Legendre duplication formula, these moments may be written as
\begin{gather*}
\frac{\Gamma(k+\lambda)\Gamma(k+1/2)}{2^{\lambda-1}\Gamma(k+(\lambda/2))\Gamma(k+(\lambda+1)/2)} = \frac{(\lambda)_k(1/2)_k}{(\lambda/2)_k((\lambda+1)/2)_k}.
\end{gather*}
According to~\cite[Theorem~6.2]{Duf1}, there exists a probability distribution $\chi = \chi_{\lambda}$ supported in~$(0,1)$ such that
\begin{gather*}
\int_0^1x^k \chi(dx) = \frac{(\lambda)_k(1/2)_k}{(\lambda/2)_k((\lambda+1)/2)_k}
\end{gather*}
if and only if $\lambda \geq 1$. This is an instance of the so-called $G$-distributions~\cite{Duf,Duf1} and its Lebesgue decomposition is given by
\begin{gather*}
\chi(dx)
 = \frac{\Gamma((\lambda+1)/2)\Gamma(\lambda/2)}{\Gamma(1/2)\Gamma(\lambda)} \\
 \hphantom{\chi(dx)=}{} \times \left[\delta_1(dx) + \frac{\lambda(\lambda-1)}{4}x^{\lambda-1}\, {}_2F_1\left(\frac{\lambda}{2}+1, \frac{\lambda+1}{2}; 2; 1-x\right){\bf 1}_{[0,1]}(x)dx\right]
\\
\hphantom{\chi(dx)}{} = \frac{1}{2^{\lambda-1}} \left[\delta_1(dx) + \frac{\lambda(\lambda-1)}{4}x^{\lambda-1}\, {}_2F_1\left(\frac{\lambda}{2}+1, \frac{\lambda+1}{2}; 2; 1-x\right){\bf 1}_{[0,1]}(x)dx\right].
\end{gather*}
Noting $\xi$ is the push forward of $\mu$ under the square map $x \mapsto x^2$, we are done.
\end{proof}

\begin{Remark}
Some parts of Theorems 1 and~2 in~\cite{Duf} as well as Theorem~6.2 in~\cite{Duf1} appear also in~\cite{Kar-Pri}.
\end{Remark}

\subsection{The Shrinkage process}

While the Wigner distribution is the limiting transition distribution of Young diagrams drawn from the Plancherel measure, the arcsine distribution describes in this case the limiting shape of these diagrams (it is referred to as the Rayleigh measure~\cite{Ker1}). In turn, the latter is the transition distribution whose Rayleigh measure is the symmetric Bernoulli distribution and belongs to a more general family of distributions related with triangular diagrams and with the Shrinkage process~\cite{Ker}. The analogue of example~\eqref{Examp2} is derived as follows. Consider~\eqref{GST1} with $\lambda = \gamma+\beta + 2$:
\begin{gather*}
\frac{1}{(z-1)^{\lambda}}\, {}_2F_1\left(\lambda, \gamma+1, \lambda, \frac{2}{1-z}\right) = \frac{1}{(z-1)^{\lambda}} \left(\frac{1-z}{-z-1}\right)^{\gamma+1}= \frac{1}{(z-1)^{\lambda-\gamma-1}}\frac{1}{(z+1)^{\gamma+1}}
\end{gather*}
for $z > 1$. Write $\gamma + 1 = \lambda p$ for some $0 < p < 1$, then
\begin{gather*}%\label{Exam3}
\int \frac{1}{(z-x)^{\lambda}} \mu_{p\lambda- 1, (1-p)\lambda -1}(dx) = \left[\frac{1}{(z-1)^{1-p}(z+1)^{p}}\right]^{\lambda} = \left[\int \frac{1}{z-x} \mu_{p -1, -p}(dx)\right]^{\lambda}.
\end{gather*}
In particular, if $p=1/2$ then
\begin{gather*}
\frac{\Gamma(\lambda)}{2^{\lambda-1}[\Gamma(\lambda/2)]^2} \int_{-1}^1 \frac{1}{(z-x)^{\lambda}} \big(1-x^2\big)^{(\lambda/2)-1}dx = \left[\int_{-1}^1 \frac{1}{z-x} \frac{dx}{\pi\sqrt{1-x^2}}\right]^{\lambda}.
\end{gather*}

\subsection{The free Poisson distribution}

Let $\mu$ be a (non necessarily symmetric) probability distribution supported in~$[-1,1]$ and denote~$\kappa_{\lambda}$ the probability distribution with beta density
\begin{gather*}
\frac{\Gamma(\lambda)}{[\Gamma(\lambda/2)]^2} [x(1-x)]^{(\lambda/2)-1}{\bf 1}_{[0,1]}(x).
\end{gather*}
Denote $\kappa_{\lambda} \star \mu$ the multiplicative convolution of~$\kappa_{\lambda}$ and~$\mu$, that is, the probability distribution of the product of two independent random variables with probability distributions~$\kappa_{\lambda}$ and~$\mu$. By def\/inition,
\begin{gather*}
\int f(x) \left(\kappa_{\lambda} \star \mu\right)(dx) = \iint f(uv)\kappa_{\lambda}(du) \mu(dv)
\end{gather*}
for any bounded measurable function~$f$. Then, for $|z| > 1$, the integral representation~\eqref{Euler} yields
\begin{align*}
\int \frac{1}{(z-x)^{\lambda}} \kappa_{\lambda} \star \mu (dx) &= \frac{\Gamma(\lambda)}{[\Gamma(\lambda/2)]^2 z^{\lambda}} \int \left(\int_0^1 \frac{1}{(1-(v/z)u)^{\lambda}} [u(1-u)]^{(\lambda/2)-1}du \right)\mu(dv)
\\ &= \frac{1}{z^{\lambda}} \int {}_2F_1\left(\lambda, \frac{\lambda}{2}, \lambda; \frac{v}{z} \right) \mu(dv)
= \frac{1}{z^{\lambda/2}}\int \frac{1}{(z-v)^{\lambda/2}}\mu(dv).
\end{align*}
In particular, if $\mu = \mu_{\lambda}$ satisf\/ies \eqref{Rel} with the exponent $\lambda/2$
\begin{gather*}
\int \frac{1}{(z-x)^{\lambda/2}} \mu_{\lambda/2}(dx) = \left[G_{\nu}(z)\right]^{\lambda/2}
\end{gather*}
for some probability measure $\nu$ independent of $\lambda$, then
\begin{gather*}
\int \frac{1}{(z-x)^{\lambda}} \kappa_{\lambda} \star \mu (dx) = \left[\frac{[G_{\nu}(z)]^{1/2}}{z^{1/2}}\right]^{\lambda}
\end{gather*}
at least for $z > 1$. Since $\Im\big([G_{\nu}(z)]^{1/2}/z^{1/2}\big) < 0$ for $\Im(z) > 0$, then the last equality extends analytically to the upper half-plane. Moreover,
\begin{gather*}
\lim_{y \rightarrow \infty} (iy) \frac{[G_{\nu}(iy)]^{1/2}}{(iy)^{1/2}}= 1
 \end{gather*}
so that $z \mapsto [G_{\nu}(z)]^{1/2}/z^{1/2}$ is a Nevannlina--Pick function and may be represented as the Stieltjes transform of a probability measure (see for instance~\cite[Lemma~2.2]{Sho-Tam}). This elementary observation allows to derive further examples of probability distributions $(\mu, \nu)$ satisfying~\eqref{Rel}. For instance,

\begin{Proposition}
For any $\lambda > 0$ and $z$ in the upper half-plane,
\begin{align*}
\int \frac{1}{(z-x)^{\lambda}} \kappa_{\lambda} \star \kappa_{\lambda+1} (dx) = \frac{2^{\lambda}}{[z + \sqrt{z(z-1)}]^{\lambda}}.
\end{align*}
\end{Proposition}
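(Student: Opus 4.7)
The plan is to apply the convolution identity displayed just before the statement, namely
\[
\int \frac{1}{(z-x)^{\lambda}}(\kappa_\lambda\star\mu)(dx) = \frac{1}{z^{\lambda/2}}\int\frac{1}{(z-v)^{\lambda/2}}\mu(dv),
\]
with $\mu=\kappa_{\lambda+1}$ (which is supported in $[0,1]\subset[-1,1]$, so the hypothesis is met). This reduces the proposition to computing the generalized Stieltjes transform with exponent $\lambda/2$ of $\kappa_{\lambda+1}$. I would first argue on the real half-line $z>1$ and then invoke the Nevanlinna--Pick/analytic continuation argument already established in the excerpt to extend the identity to the upper half-plane.

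Next I would perform the affine change of variables $v=(1+x)/2$, which sends $\kappa_{\lambda+1}$ (a beta distribution on $[0,1]$ with both parameters equal to $(\lambda+1)/2$) to the symmetric beta distribution $\mu_{(\lambda-1)/2,(\lambda-1)/2}$ on $[-1,1]$ after an explicit rescaling in $z$ (with new spectral variable $Z:=2z-1$). The resulting integrand has exponent $(1-x^2)^{(\lambda-1)/2}$ and generalized Stieltjes exponent $\lambda/2$, so $(\lambda-1)/2=(\lambda/2)-1/2$ exactly matches the parameter range of the first example \eqref{Examp1} with $\lambda$ replaced by $\lambda/2$. Applying that identity gives
\[
\int\frac{1}{(z-v)^{\lambda/2}}\kappa_{\lambda+1}(dv) = C_\lambda\,[G_{\rm W}(2z-1)]^{\lambda/2}
\]
for some explicit constant $C_\lambda$ that I would collect from the change-of-variables Jacobian, the $2^{\lambda/2}$ coming from pulling $2$'s out of $(z-v)^{\lambda/2}$, and the normalization of $\kappa_{\lambda+1}$. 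The Legendre duplication formula (taken at $z=(\lambda+1)/2$) collapses this constant to $C_\lambda=2^{\lambda/2}$.

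Finally, I would simplify $G_{\rm W}(2z-1)=2/[(2z-1)+2\sqrt{z(z-1)}]$ using $(2z-1)^2-1=4z(z-1)$, and verify the algebraic identity
\[
[z+\sqrt{z(z-1)}]^2 = z\bigl[(2z-1)+2\sqrt{z(z-1)}\bigr],
\]
which yields
\[
\frac{1}{z^{\lambda/2}}\cdot 2^{\lambda/2}[G_{\rm W}(2z-1)]^{\lambda/2} = \frac{2^{\lambda}}{[z+\sqrt{z(z-1)}]^{\lambda}},
\]
completing the proof on $z>1$ and, by the analytic-continuation argument already given, on the whole upper half-plane.

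The main obstacle is purely bookkeeping: tracking the factors of $2$ from the change of variables and matching prefactors against the correct instance of the Legendre duplication formula. The branch of $(z-x)^{\lambda}$ and of the square root $\sqrt{z(z-1)}$ must also be chosen consistently (principal branches), but this follows the same scheme as for \eqref{GST2}, so no new input is required.
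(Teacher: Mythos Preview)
Your proof is correct. Both you and the paper begin by applying the convolution identity with $\mu=\kappa_{\lambda+1}$, but from there the routes diverge slightly. The paper recognizes the remaining integral directly as an Euler integral for ${}_2F_1\bigl(\tfrac{\lambda}{2},\tfrac{\lambda+1}{2},\lambda+1;1/z\bigr)$ and then applies the closed form~\eqref{Id1} to obtain $2^{\lambda}/[\sqrt{z}(\sqrt{z}+\sqrt{z-1})]^{\lambda}$. You instead perform the affine change $v=(1+x)/2$ to push $\kappa_{\lambda+1}$ onto the symmetric beta distribution $\mu_{(\lambda-1)/2,(\lambda-1)/2}$ on $[-1,1]$ and then quote the already established example~\eqref{Examp1} with $\lambda$ replaced by $\lambda/2$. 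Since~\eqref{Examp1} was itself derived from~\eqref{Id1}, the two arguments have the same analytic core; your version trades the direct hypergeometric identification for a reduction to a previously proved case, at the cost of the constant-tracking and the algebraic identity $[z+\sqrt{z(z-1)}]^2=z\bigl[(2z-1)+2\sqrt{z(z-1)}\bigr]$ you mention. Both approaches finish with the same analytic-continuation step to the upper half-plane.
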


\begin{proof}
Let $\mu = \kappa_{\lambda+1}$, then for $z$ in a suitable region in the upper half-plane,
\begin{align*}
\int \frac{1}{(z-x)^{\lambda}} \kappa_{\lambda} \star \kappa_{\lambda+1} (dx) &= \frac{\Gamma(\lambda+1)}{[\Gamma(\lambda+1/2)]^2} \frac{1}{z^{\lambda/2}}\int \frac{1}{(z-x)^{\lambda/2}}[x(1-x)]^{(\lambda+1)/2-1}dx
\\ & = \frac{1}{z^{\lambda}}\, {}_2F_1\left(\frac{\lambda}{2}, \frac{\lambda+1}{2}, \lambda+1; \frac{1}{z} \right)
\\ & = \frac{2^{\lambda}}{[\sqrt{z}(\sqrt{z} + \sqrt{z-1})]^{\lambda}} = \frac{2^{\lambda}}{[z + \sqrt{z(z-1)}]^{\lambda}},
\end{align*}
where the third equality follows from \eqref{Id1}. But, the map
\begin{gather*}
z \mapsto \frac{2}{z + \sqrt{z(z-1)}} = 2\frac{z - \sqrt{(z-(1/2))^2- (1/4)}}{z}
\end{gather*}
is the Stieltjes transform of the free Poisson distribution with parameters $(1, 1/4)$ (see for instance \cite[p.~204]{NS}) whose density reads
\begin{gather*}
\frac{2}{\pi} \sqrt{\frac{1-x}{x}}{\bf 1}_{[0,1]}(x).
\end{gather*}
Therefore, the proposition follows by analytic continuation.
\end{proof}

\begin{Remark}
The free Poisson distribution appears the limiting transition measure of random diagrams arising from the decomposition of tensor product representations of the symmetric group~\cite{Biane1}. On the other hand, the density of $\kappa_{\lambda} \star \kappa_{\lambda+1}$ is given by~\cite{Duf}
\begin{gather*}
\frac{\Gamma(\lambda)\Gamma(\lambda+1)}{\Gamma(\lambda/2)\Gamma((\lambda+1)/2)\Gamma(\lambda +(1/2))}x^{(\lambda/2)-1}(1-x)^{\lambda - 1/2}\\
\qquad{}\times {}_2F_1\left(\frac{\lambda-1}{2}, \frac{\lambda+1}{2}, \lambda+\frac{1}{2}, 1-x\right) {\bf 1}_{[0,1]}(x).
\end{gather*}
\end{Remark}

\section{Further developments}
The Humbert polynomials $\big(H_{n}^{(\alpha,d)}\big)_{n \geq 0}$ of parameters $\alpha > -1/2$, $\alpha \neq 0$, $d \in \mathbb{N} {\setminus} \{0\}$ and degrees $n \in \mathbb{N}$, are def\/ined by their generating series:
\begin{gather*}
\sum_{n \geq 0} H_{n}^{(\alpha,d)} z^n = \frac{1}{(1-(d+1)xz + z^{d+1})^{\alpha}},
\end{gather*}
and reduce to ultraspherical polynomials $\big(C_n^{(\alpha)}\big)_n$ when $d=1$. They cease to be orthogonal as soon as $d \geq 2$ and are rather $d$-orthogonal in the following sense (see, e.g.,~\cite{Lam-Oun}): there exist $d$ linear functionals $\Gamma_0^{(\alpha,d)}, \dots, \Gamma_{d-1}^{(\alpha,d)}$ on the space of polynomials such that
\begin{gather*}
\Gamma_k\big(H_n^{(\alpha,d)}H_r^{(\alpha,d)}\big) = 0, \qquad r > dn +k, \qquad
\Gamma_k\big(H_n^{(\alpha,d)}H_{nd+k}^{(\alpha,d)}\big) \neq 0,
\end{gather*}
for all $n \in \mathbb{N}$, $0 \leq k \leq d-1$. In particular, $\Gamma_0^{(\alpha,d)}\big(H_0^{(\alpha,d)}H_r^{(\alpha,d)}\big) = 0$ for any $r > 0$ so that
\begin{gather*}
\int \frac{1}{(1-(d+1)xz + z^{d+1})^{\alpha}}\Gamma_0(dx) = 1.
\end{gather*}
Hence, we can f\/ind a suitable region outside the compact support of $\Gamma_0^{(\alpha,d)}$ for which
\begin{gather*}
\int \frac{1}{(f_{\alpha}(z)- x)^{\alpha}}\Gamma_0(dx) = [(d+1)z]^{\alpha},
\qquad \text{where} \qquad f_{\alpha}(z):= \frac{1+z^{d+1}}{(d+1)z}.
\end{gather*}
Now, the trinomial equation $f_{\alpha}(z) = y$ has $(d+1)$ roots which may be expressed through the Gauss hypergeometric function~\cite{Gla,Hil} and we seek the one which tends to zero when $y$ tends to inf\/inity in the upper half-plane. For instance, when $d=2$ then the polynomial equation
\begin{gather*}
z^3- 3yz + 1 = 0, \qquad \Im(y) > 0,
\end{gather*}
may be transformed by setting $z = \sqrt{-y} w$ into
\begin{gather*}
w^3 + 3w + \frac{1}{(-y)^{3/2}} = 0.
\end{gather*}
From \cite[p.~265]{Hil} the sought root is given by
\begin{gather*}
z = -\frac{(-y)^{1/2}}{3(-y)^{3/2}} \, {}_2F_1\left(\frac{1}{3}, \frac{2}{3}, \frac{3}{2}; -\frac{1}{4((-y)^{3/2})^2}\right) = \frac{1}{3y} \, {}_2F_1\left(\frac{1}{3}, \frac{2}{3}, \frac{3}{2}; \frac{1}{4y^3}\right),
\end{gather*}
where the last equality is valid for $y$ lying in some sector in the upper half-plane. Using the Euler integral representation
\begin{gather*}
{}_2F_1\left(\frac{1}{3}, \frac{2}{3}, \frac{3}{2}; \frac{1}{4y^3}\right) = \frac{4^{1/3}\Gamma(3/2)}{\Gamma(2/3)\Gamma(5/6)} \int_0^1 \frac{y}{(y^3-x)^{1/3}} x^{-1/3}(1-x)^{-1/6}dx,
\end{gather*}
we get the following equality
\begin{gather*}
\int \frac{1}{(y- x)^{\alpha}}\Gamma_0^{(\alpha,2)}(dx) = \left\{\frac{4^{1/3}\Gamma(3/2)}{\Gamma(2/3)\Gamma(5/6)} \int_0^1 \frac{1}{(y^3-x)^{1/3}} x^{-1/3}(1-x)^{-1/6}dx\right\}^{\alpha}.
\end{gather*}
More generally, the solution to the equation
\begin{gather*}
z^{d+1} + (d+1)z + \frac{1}{(-y)^{(d+1)/d}} = 0, \qquad \Im(y) > 0,
\end{gather*}
tending to zero as $y \rightarrow \infty$ may be derived along the same lines written in~\cite[p.~266]{Hil}, and may be expressed through the hypergeometric series
\begin{gather*}
{}_dF_{d-1} \left(\frac{i}{d+1}, \, 1 \leq i \leq d, \, \frac{i+1}{d}, \, 1 \leq i \leq d, \, i \neq d-1; \, \frac{(-1)^d}{d^dy^{d+1}}\right).
\end{gather*}
Thus, the reasoning above applies and leads to the generalized Stieltjes transform of $\Gamma_0^{(\alpha,d)}$ as a~$\alpha$-power of this hypergeometric series.
\begin{Remark}
An anonymous referee pointed out to the author that the functional $\Gamma_0^{(\alpha,d)}$ coincides up to the variable change $x \mapsto d^dx^{d+1}$ with the representative measure of the hyper\-geo\=metric function ${}_{d+1}F_{d}$ as a generalized Stieltjes transform~\cite[Theorem~2]{Kar-Pri1}. Actually, $\Gamma_0^{(\alpha,d)}$ is expressed in terms of the Meijer $G$-function~\cite[Theorem 2.4]{Lam-Oun}
\begin{gather*}
\frac{1}{x}G_{d+1, d+1}^{d+1,0}\left(d^dx^{d+1} \Big| \begin{array}{@{}c@{}}(\alpha+1)/d,\dots, (\alpha+d)/d, 1 \\ 1/(d+1), \dots, d/(d+1), 1\end{array} \right)
\end{gather*}
which reduces to
\begin{gather*}
\frac{1}{x}G_{d, d}^{d,0}\left(d^dx^{d+1} \Big| \begin{array}{@{}c@{}} (\alpha+1)/d,\dots, (\alpha+d)/d \\ \displaystyle 1/(d+1), \dots, d/(d+1)\end{array}\right),
\end{gather*}
while Theorem 2 in \cite{Kar-Pri1} entails
\begin{gather*}
{}_{d+1}F_{d}\left(\alpha, a_1,\dots, a_d, b_1, \dots, b_d, \frac{1}{y}\right) = \prod_{i=1}^d \frac{\Gamma(b_i)}{\Gamma(a_i)} \int_0^1 \frac{y^{\alpha}}{(y-x)^{\alpha}}G_{d,d}^{d,0}\left(x \Big| \begin{array}{@{}c@{}} b_1, \dots, b_d \\ \displaystyle a_1, \dots, a_d \end{array}\right) \frac{dx}{x}.
\end{gather*}
for $y \in \mathbb{C} {\setminus} (-\infty, 1]$.
\end{Remark}

\subsection*{Acknowledgements}

The author would like to thank H.~Cohl and C.~Dunkl for their helpful remarks on earlier versions of the paper. He also greatly appreciates the suggestions of the anonymous referees which considerably improved the presentation.

\pdfbookmark[1]{References}{ref}
\LastPageEnding

\end{document}